\numberwithin{equation}{section}
\newtheorem{thm}{Theorem}[section]
\newtheorem{lma}[thm]{Lemma}
\newtheorem{cor}[thm]{Corollary}
\renewcommand{\epsilon}{\varepsilon}
\newcommand{\conp}{\textup{con}^+(\mathbb{D}^n)}
\renewcommand{\geq}{\geqslant}
\renewcommand{\leq}{\leqslant}
\newcommand{\ubd}{\overline{\dim}_{\textup{B}}}
\newcommand{\lbd}{\underline{\dim}_{\textup{B}}}
\newcommand{\bd}{\dim_{\textup{B}}}
\newcommand{\hd}{\dim_{\textup{H}}}
\title{Dimensions of Kleinian orbital sets}
\author{Thomas Bartlett \& Jonathan M. Fraser\\ \\
 U\MakeLowercase{niversity of} S\MakeLowercase{t} A\MakeLowercase{ndrews}, S\MakeLowercase{cotland} \\
\MakeLowercase{Emails: tomj.bartlett@gmail.com \& jmf32@st-andrews.ac.uk}}
\thanks{JMF was  financially supported by an \emph{EPSRC Standard Grant} (EP/R015104/1) and a  \emph{Leverhulme Trust Research Project Grant} (RPG-2019-034). The authors thank Liam Stuart for helpful comments. }
\begin{document}


\maketitle
\thispagestyle{empty}

\begin{abstract}
Given a non-empty bounded subset of hyperbolic space and a Kleinian group acting on that space, the \emph{orbital set} is the orbit of the given set under the action of the group. We may view orbital sets as bounded (often fractal) subsets of   Euclidean space.  We prove that  the upper box dimension of an orbital set is given by the maximum of three quantities: the upper box dimension of the given set; the Poincar\'e exponent of the Kleinian group; and the upper box dimension of the limit set of the Kleinian group.  Since we do not make any assumptions about the Kleinian group, none of the terms in the maximum can be removed in general. We show by constructing an explicit example that the (hyperbolic) boundedness assumption on $C$ cannot be removed in general. 
\\ \\ 
\emph{Mathematics Subject Classification} 2010: primary: 37F32, 28A80; secondary:  30F40, 28A78, 11J72.
\\
\emph{Key words and phrases}: orbital set, Kleinian group, Poincar\'e exponent,  upper box dimension, limit set, inhomogeneous attractor.
\end{abstract}

\section{Kleinian orbital sets}

\subsection{Hyperbolic geometry,  Kleinian groups, and orbital sets}

Let  $n \geq 2$ be an integer and consider the Poincar\'e ball 
\[
\mathbb{D}^n = \{ z \in \mathbb{R}^n: |z| <1\}
\]
equipped with the hyperbolic metric $d $ given by
\[
|ds| = \frac{|dz|}{1-|z|^2}.
\]
This provides a model of $n$-dimensional hyperbolic space.  The group of orientation preserving isometries of $(\mathbb{D}^n, d )$ is the group of conformal automorphisms of $\mathbb{D}^n$, which we denote by $\conp$.  A group $\Gamma \leq  \conp$ is called \emph{Kleinian} if it is a discrete   subset of $\conp$.  Kleinian groups generate fractal limit sets living on the boundary $S^{n-1}$ as well as  beautiful tessellations of hyperbolic space.  Both of these objects are defined via orbits.  The \emph{limit set} is defined by
\[
L(\Gamma) = \overline{\Gamma(0)} \setminus \Gamma(0)
\]
where $\Gamma(0) = \{ g(0) : g \in \Gamma\}$ is the orbit of 0 under $\Gamma$ and $\overline{\Gamma(0)}$ is the   Euclidean closure of $\Gamma(0)$.  On the other hand, hyperbolic  tessellations arise by taking the orbit of a fundamental domain for the group action.

The \emph{Poincar\'e exponent} is a coarse measure of  the rate of accumulation to the boundary.  It is defined as the exponent of convergence of the \emph{Poincar\'e series} 
\[
P_\Gamma(s) = \sum_{g \in \Gamma } \exp(-sd (0,g(0))) = \sum_{g \in \Gamma} \left(\frac{1-|g(0)|}{1+|g(0)|} \right)^s
\]
for $s \geq 0$.  Therefore the  \emph{Poincar\'e exponent} may be expressed as 
\[
\delta(\Gamma) = \inf\{ s \geq 0 : P_\Gamma(s) <\infty\}.
\]
A Kleinian group is called \emph{non-elementary} if its limit set contains at least 3 points, in which case it is necessarily an uncountable perfect set.  In the case $n=2$ Kleinian groups are more commonly referred to as Fuchsian groups.  For more background on hyperbolic geometry and Kleinian groups see \cite{beardon, maskit}.

In this paper we introduce and study Kleinian orbital sets.  In some sense these provide a bridge between   limit sets and hyperbolic tessellations.  Fix a non-empty  set $C \subseteq \mathbb{D}^n$ and a Kleinian group $\Gamma$.    The \emph{orbital set} is defined to be
\[
\Gamma(C) = \bigcup_{g \in \Gamma} g(C).
\]
It is easy to see that if $C$ is (the hyperbolic closure of) a  fundamental domain, then the orbital set is the whole space, that is, $\Gamma(C) = \mathbb{D}^n$.  Moreover, the limit set is immediately contained in the Euclidean closure of any orbital set.  

There is a celebrated connection between hyperbolic geometry (especially Fuchsian groups) and the artwork of M. C. Escher. Orbital sets fall very naturally into this discussion since many of the memorable images from Escher's work  are orbital sets (rather than tessellations).  Here  $C$ could be a  large central bat or fish, which is then repeated many times on smaller and smaller scales towards the boundary of $\mathbb{D}^n$.

\subsection{Dimension theory} \label{dimsection}

There has been a great deal of interest in estimating the fractal dimension of the limit set of a Kleinian group.   We write $\hd, \, \ubd$ to denote the Hausdorff  and upper box dimension, respectively.  We refer the reader to \cite{falconer} for more background on dimension theory.  Since we use upper box dimension directly we recall the definition.  Given a bounded set $E$ in a metric space and a scale $\delta>0$, let $N_\delta(E)$ denote the smallest number of sets of diameter $\delta$ required to cover $E$.  (We say a collection of sets $\{U_i\}_i$ covers $E$ if $E \subseteq \cup_i U_i$.) Then the \emph{upper box dimension} of $E$ is 
\[
\ubd E = \limsup_{\delta \to 0} \frac{\log N_\delta(E)}{-\log \delta}.
\]
If we replace the $\limsup$ in the above with a $\liminf$ then we define the \emph{lower box dimension}, denoted by $\lbd E$. If the upper and lower box dimension coincide, then we write $\bd E$ for the common value and simply refer to the box dimension.  For all non-empty  bounded sets $E \subseteq \mathbb{R}^n$,
\[
0 \leq \hd F \leq \lbd E  \leq \ubd E \leq n.
\]
For all non-elementary Kleinian groups,  
\begin{equation} \label{dimlimit}
\delta(\Gamma) \leq \hd L(\Gamma)  \leq \lbd L(\Gamma)  \leq \ubd L(\Gamma)
\end{equation}
and for non-elementary \emph{geometrically finite}  Kleinian groups,  
\[
\delta(\Gamma) = \hd L(\Gamma) =  \bd L(\Gamma) .
\]
See \cite{bowditch} for more details on geometric finiteness.  In the geometrically infinite case, it is possible that $\delta(\Gamma) < \hd L(\Gamma)$.  These results go back to  Patterson  \cite{patterson}, Sullivan \cite{sullivan},  Bishop and Jones \cite{bishopjones} and Stratmann and Urba\'nski \cite{su}.  See the survey \cite{stratmann}.  Falk and Matsuzaki \cite{falk} characterise the upper box dimension of an arbitrary non-elementary Kleinian group as the  the convex core entropy of the group, denoted by $h_c(\Gamma)$.

Orbital sets provide a new family of fractal sets associated with Kleinian group actions. As such it is a well-motivated problem to consider their  dimension theory.  It is most natural to consider the dimensions of $\Gamma(C)$ with respect to the Euclidean metric on $\mathbb{R}^n$.  Since the Hausdorff dimension is countably stable and the maps $g \in \Gamma$ are conformal, it is immediate that
\begin{equation} \label{guess}
\hd \Gamma(C) = \hd C.
\end{equation}
The upper box dimension fails to be countably stable in general and so computing the upper box dimension of an orbital set is potentially an interesting problem.  Since upper box dimension is stable under taking closure, it is immediate that
\[
\ubd \Gamma(C) \geq \ubd L(\Gamma)
\]
and so we already see that the expected analogue of \eqref{guess} does not hold in general for upper box dimension.

\subsection{Inhomogeneous attractors}

The idea to study orbital sets is partially motivated by the theory of inhomogeneous iterated function systems, introduced by Barnsley and Demko \cite{barnsley}.  This is also where we took the name orbital set from.  We refer the reader to \cite{superfractals, barnsley, burrell, fraserinhom} for more details but briefly outline the connection here.  Consider an iterated function system (IFS), which is a finite collection of contractions $\mathcal{S} = \{S_i\}_i$ of a compact metric space $X$ into itself.  Fix a compact set $C \subseteq X$. A simple application of Banach's contraction mapping theorem yields that   there exists a unique non-empty compact attractor $F_C$ satisfying
\[
F_C = \bigcup_{S \in \mathcal{S}}  S(F_C)  \cup C.
\]
Classical attractors of IFSs are when $C=\emptyset$ and we denote these by $F_\emptyset$.  See \cite{falconer} for more background on classical IFS theory.  Let $\mathcal{M}$ be the monoid generated by $\mathcal{S}$.  It is straightforward to see that $F_C$ is the closure of the orbital set $\mathcal{M}(C)$. In particular, the box dimensions of $F_C$ and  $\mathcal{M}(C)$ coincide, but the Hausdorff dimensions may differ since countable stability guarantees $\hd \mathcal{M}(C) = \hd C$.  The  box dimensions of $F_C$ have been studied in several contexts, e.g. \cite{baker, burrell, fraserinhom,antti, lars}.  If the IFS $\mathcal{S}$ consists of similarity maps and satisfies the strong open set condition, then it was shown in \cite{fraserinhom} that
\begin{equation} \label{guess2}
\ubd F_C = \max\{ \ubd F_\emptyset, \ubd C\}.
\end{equation}
The lower box dimension does not behave so well, see \cite{fraserinhom}, and this formula does not necessarily hold when the strong open set condition fails, see \cite{baker}.

\section{Main results}

Our main result is a complete characterisation of the upper box dimension of Kleinian orbital sets with $C$ bounded in the hyperbolic metric.  This should be compared with \eqref{guess2}.  It is perhaps noteworthy that we do not make any assumptions on the Kleinian group. In particular, it does not have to be geometrically finite and the result  holds for elementary and non-elementary groups. We also do not require $C$ to be contained in a fundamental domain and so the images of $C$ appearing in the orbital set may overlap. We make essential use of the assumption that $C$ is bounded (in the hyperbolic metric) in the proof and it turns out that this  cannot be removed in general, see Theorem \ref{examplethm}.  For clarity we recall that we compute the dimension of the orbital set with respect to the Euclidean metric on $\mathbb{R}^n$.
\begin{thm}
\label{thm:2}
Let $\Gamma$ be a Kleinian group acting on $\mathbb{D}^n$ and $C$ be a non-empty bounded subset of $\mathbb{D}^n$. Then 
$$
\ubd \Gamma(C) = \max\left\{\ubd L(\Gamma), \ubd C, \delta(\Gamma)\right\}.
$$
\end{thm}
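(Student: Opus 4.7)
The plan is to prove matching upper and lower bounds. For the lower bound, I would verify the three inequalities separately. The inequality $\ubd \Gamma(C) \geq \ubd C$ is immediate from $C \subseteq \Gamma(C)$, and $\ubd \Gamma(C) \geq \ubd L(\Gamma)$ follows from $L(\Gamma) \subseteq \overline{\Gamma(c)} \subseteq \overline{\Gamma(C)}$ for any $c \in C$ combined with closure-invariance of upper box dimension. For $\ubd \Gamma(C) \geq \delta(\Gamma)$, which must be proved directly because \eqref{dimlimit} is not available in the elementary setting, I would use the standard orbit-counting reformulation $\delta(\Gamma) = \limsup_{R \to \infty} \log \#\{g : d(0,g(0)) \leq R\}/R$: along an unbounded sequence of radii $R$, a dyadic pigeonhole produces a unit shell $\{g : R \leq d(0, g(0)) < R+1\}$ containing at least $e^{R(\delta(\Gamma) - \eps)}$ elements. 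Discreteness of $\Gamma$ gives a uniform lower bound on their pairwise hyperbolic distance, and the hyperbolic-to-Euclidean conversion formula then forces pairwise Euclidean separation of order $e^{-R}$, yielding $N_{e^{-R}}(\Gamma(c)) \gtrsim e^{R(\delta(\Gamma) - \eps)}$ and the claim on letting $\eps \to 0$.

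For the upper bound, the essential preliminary is that, since $C$ is hyperbolically bounded, bounded distortion of M\"obius maps on hyperbolic balls gives $d_g := \textup{diam}(g(C)) \asymp 1 - |g(0)|$ uniformly in $g$ once $|g(0)|$ is close enough to $1$ (finitely many exceptions contribute $O(1)$ to any cover). Fix $\delta > 0$ and split $\Gamma = \Gamma^B \sqcup \Gamma^S$ where $\Gamma^B = \{g : d_g \geq \delta\}$ and $\Gamma^S = \{g : d_g < \delta\}$. For the big part, bounded distortion also gives $N_\delta(g(C)) \lesssim (d_g/\delta)^{\ubd C + \eps}$, and summing yields
\[
N_\delta\left(\bigcup_{g \in \Gamma^B} g(C)\right) \leq \sum_{g \in \Gamma^B} \left(\frac{d_g}{\delta}\right)^{\ubd C + \eps} \leq \delta^{-s} \sum_{g} d_g^s
\]
for any $s \geq \ubd C + \eps$. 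Choosing $s = \max\{\ubd C, \delta(\Gamma)\} + 2\eps$ makes the last sum comparable to $P_\Gamma(s) < \infty$, so the big part contributes at most $C \delta^{-s}$.

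The small part $\bigcup_{g \in \Gamma^S} g(C)$ is the main obstacle. Each such $g(C)$ lies in a Euclidean ball of radius $\asymp \delta$ centred at the shadow $\xi_g := g(0)/|g(0)| \in S^{n-1}$, so covering reduces to covering the shadow set $\{\xi_g : g \in \Gamma^S\} \subseteq S^{n-1}$. I would split this set into shadows within $\delta$ of $L(\Gamma)$ and those outside: the near shadows sit in a $C\delta$-neighbourhood of $L(\Gamma)$ and contribute $\lesssim N_\delta(L(\Gamma)) \lesssim \delta^{-\ubd L(\Gamma) - \eps}$ to the cover. For the far shadows, the key geometric fact is that the Carleson box over any $\delta$-region of $S^{n-1}$ at a fixed dyadic depth is a hyperbolic ball of bounded radius and so contains only $O(1)$ orbit points by discreteness; aggregating this count across dyadic depths using a Poincar\'e-type weighted sum is designed to produce the bound $\delta^{-\delta(\Gamma) - \eps}$.

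The hardest step is making the far-shadow aggregation rigorous for arbitrary Kleinian groups without appealing to geometric finiteness or Patterson-Sullivan theory. In the non-elementary setting \eqref{dimlimit} lets one collapse $\delta(\Gamma)$ into the $\ubd L(\Gamma)$ term so the small part is dominated by the near-shadow estimate, but the elementary case — most importantly parabolic cyclic groups, where $\ubd L(\Gamma) = 0$ yet $\delta(\Gamma)$ can be positive — must be handled by direct analysis of the orbit distribution on horocycles and horospheres. The toy example of the cyclic parabolic group on $\mathbb{D}^2$, where orbit points on the horocycle at Euclidean distance $r$ from the parabolic fixed point are spaced $\asymp r^2$, already produces the target $\delta^{-1/2}$ covering behaviour matching $\delta(\Gamma) = 1/2$, and calibrates the shape of the general argument.
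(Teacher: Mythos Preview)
Your lower bound and the ``big part'' of the upper bound are essentially the paper's argument (your direct orbit-counting route to $\ubd \Gamma(C) \geq \delta(\Gamma)$ is a legitimate alternative to the paper's appeal to \eqref{dimlimit} plus a case analysis of elementary groups). The gap is in the small part.

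Your far-shadow step does not work as written. The assertion that ``the Carleson box over any $\delta$-region of $S^{n-1}$ at a fixed dyadic depth is a hyperbolic ball of bounded radius'' is false: a region of boundary width $\delta$ at Euclidean depth $2^{-k}\delta$ has hyperbolic diameter $\asymp 2^k$, so discreteness gives no useful $O(1)$ bound there. If instead you subdivide into genuine hyperbolic balls of bounded radius, there are $\asymp 2^{k(n-1)}$ of them per $\delta$-region per depth and the sum over $k$ diverges. More fundamentally, a Poincar\'e-series aggregation cannot by itself bound the \emph{number of $\delta$-balls} needed for the small part, because the series only controls a weighted count of orbit points, not how spread out their shadows are on $S^{n-1}$.

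The missing idea, which the paper isolates as a lemma, is that there are \emph{no} far shadows whenever $\Gamma$ contains a loxodromic element $h$: if $z$ lies on the axis of $h$ then $g(z)$ lies on the axis of the loxodromic $ghg^{-1}$, whose fixed points belong to $L(\Gamma)$; elementary hyperbolic geometry then forces one of these fixed points to be within $\lesssim 1-|g(0)|$ of $g(z)$. Thus every $g(C)$ with $|g(0)| \geq 1-\delta$ already sits in a $\lesssim_{C,\Gamma}\delta$-neighbourhood of $L(\Gamma)$, and your near-shadow estimate handles the entire small part. The only groups with no loxodromic are finite (trivial) or purely parabolic of rank $k$; for the latter the paper avoids your proposed direct horosphere computation by embedding $\Gamma$ in a sequence of non-elementary geometrically finite groups $\Gamma_n \supseteq \Gamma$ with $\delta(\Gamma_n) \to k/2$ and applying the already-proved case to $\Gamma_n(C) \supseteq \Gamma(C)$.
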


We defer the proof of Theorem \ref{thm:2} to Section \ref{proof1}. It is a new feature in the Kleinian groups case that three distinct terms appear in the maximum, recall \eqref{guess2}.  We briefly point out that all three terms are needed in general: 

\begin{enumerate}
	\item Suppose $\Gamma$ is geometrically infinite and satisfies $\delta(\Gamma) < \ubd L(\Gamma)$ and $C$ is a single point. Then
\[
\ubd L(\Gamma) > \delta(\Gamma) > 0 = \ubd C .
\]	
\item Suppose $\Gamma$ is generated by a single hyperbolic element and $C$ is a line segment.  Then
\[
\ubd C = 1 > 0 =   \delta(\Gamma) = \ubd L(\Gamma).
\]

	\item Suppose $\Gamma$ is generated by a single parabolic element and $C$ is a single point.  Then
\[
\delta(\Gamma) = 1/2 > 0 = \ubd L(\Gamma) = \ubd C.
\]
\end{enumerate}

Interestingly, the assumption that $C$ is a bounded subset of hyperbolic space $\mathbb{D}^n$ cannot be removed in general.  This was a surprise to us. 

\begin{thm} \label{examplethm}
There exists a non-empty set $C\subseteq \mathbb{D}^2$ and an (elementary) Fuchsian  group $\Gamma$  acting on $\mathbb{D}^2$ such that 
\[
\ubd L(\Gamma) =  \ubd C =  \delta(\Gamma) = 0
\]
but
\[
\lbd \Gamma(C) = \ubd \Gamma(C)= 1.
\]
(Theorem \ref{thm:2} ensures that such a set $C$ must be unbounded.)
\end{thm}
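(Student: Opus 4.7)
The plan is to construct an explicit example, working in the upper half-plane model $\mathbb{H}^2$ and then transferring to $\mathbb{D}^2$ via the Cayley transform $\phi(z) = (z-i)/(z+i)$. The group will be $\Gamma = \langle g\rangle$ with $g(z) = 2z$, a cyclic hyperbolic Fuchsian group whose translation length along its axis is $\log 2$. After conjugation by $\phi$ it acts on $\mathbb{D}^2$ as a hyperbolic element fixing $\pm 1$, so $L(\Gamma) = \{-1,1\}$ and $\ubd L(\Gamma) = 0$. Moreover $d(i, g^n i) = |n|\log 2$ makes the Poincar\'e series $\sum_{n\in\mathbb{Z}} e^{-s|n|\log 2}$ converge for every $s>0$, so $\delta(\Gamma) = 0$.

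To choose $C$, I fix any irrational $\alpha > 0$ and set $C = \phi(C_{\mathbb{H}})$ where
\[
C_{\mathbb{H}} = \{2^{-\alpha k}\, i : k \in \mathbb{N}\} \subseteq \mathbb{H}^2.
\]
Because $\phi$ maps the positive imaginary axis homeomorphically onto the real diameter $(-1,1)$, the set $C$ is a countable subset of $(-1,0)$ accumulating only at $-1$, and an elementary covering count gives $N_\delta(\overline{C}) = O(\log(1/\delta))$, hence $\ubd C = 0$. The set is hyperbolically unbounded since $d(i, 2^{-\alpha k}i) = \alpha k \log 2 \to \infty$, which by Theorem \ref{thm:2} is a necessary feature of any such example.

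The heart of the argument is density. The orbit
\[
\Gamma(C_{\mathbb{H}}) = \{2^{n - \alpha k}\, i : n \in \mathbb{Z},\ k \in \mathbb{N}\}
\]
lies on the positive imaginary axis, and by Weyl equidistribution of $\{k \alpha \bmod 1\}$ the exponents $n - \alpha k$ are dense in $\mathbb{R}$. Thus $\Gamma(C_{\mathbb{H}})$ is dense in $\{yi : y>0\}$, so $\Gamma(C) = \phi(\Gamma(C_{\mathbb{H}}))$ is dense in the diameter $(-1,1)$, and its Euclidean closure equals $[-1,1]$. Closure-stability of both box dimensions then yields $\lbd \Gamma(C) = \ubd \Gamma(C) = 1$, as desired.

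The main obstacle is really just finding the right $C$: it has to be Euclideanly zero-dimensional, yet positioned so that the isometries of $\Gamma$ spread its orbit densely across a one-dimensional curve. Placing $C$ on the axis of $\Gamma$ in geometric progression with an irrational log-ratio relative to the translation length $\log 2$ is the natural way to achieve this, because the group action then becomes an irrational rotation on $\mathbb{R}/\log 2\cdot\mathbb{Z}$ and density follows from the standard Weyl argument; once density is secured, closure-stability of box dimensions finishes the job.
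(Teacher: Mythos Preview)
Your proof is correct and follows essentially the same strategy as the paper: a cyclic hyperbolic group, with $C$ placed along its axis in a geometric progression whose log-ratio is incommensurable with the translation length, so that the orbit becomes dense in the axis and closure-stability of box dimension gives the result. The only difference is cosmetic: you work in the upper half-plane (where the isometry is $z\mapsto 2z$ and the density reduces directly to the irrational rotation $\{k\alpha\bmod 1\}$), while the paper carries out the same computation in the disk model using Dirichlet's theorem; your coordinates make the verification a little cleaner, but the underlying construction is identical.
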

We defer the proof of Theorem \ref{examplethm} to Section \ref{proof2}.  If we make further assumptions about $\Gamma$, then one of the terms in the maximum from Theorem \ref{thm:2} may be dropped. The following two corollaries follow immediately from Theorem \ref{thm:2} together with well-known results, see the discussion in Section \ref{dimsection}.

\begin{cor}
Let $\Gamma$ be a geometrically finite, non-elementary Kleinian group  acting on $\mathbb{D}^n$ and  $C$ be a non-empty bounded  subset of $\mathbb{D}^n$. Then
$$
\ubd \Gamma(C)   = \max\left\{\ubd C,\delta(\Gamma)\right\}.
$$
\end{cor}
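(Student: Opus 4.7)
The plan is to observe that this corollary is an immediate consequence of Theorem \ref{thm:2} once we invoke the classical dimension equality for limit sets of geometrically finite non-elementary Kleinian groups recalled in Section \ref{dimsection}.

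Concretely, Theorem \ref{thm:2} already gives
\[
\ubd \Gamma(C) = \max\left\{\ubd L(\Gamma),\, \ubd C,\, \delta(\Gamma)\right\}
\]
under the stated hypotheses on $C$, with no restriction on $\Gamma$. The only additional ingredient needed is the Patterson--Sullivan--Bishop--Jones--Stratmann--Urba\'nski type result that, for a non-elementary geometrically finite Kleinian group $\Gamma$,
\[
\delta(\Gamma) = \hd L(\Gamma) = \bd L(\Gamma),
\]
which is exactly the equality displayed in Section \ref{dimsection}. In particular $\ubd L(\Gamma) = \delta(\Gamma)$, so the first term in the maximum of Theorem \ref{thm:2} is dominated by (indeed equal to) the third and can be dropped.

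Therefore the main step is simply to substitute $\ubd L(\Gamma) = \delta(\Gamma)$ into the formula of Theorem \ref{thm:2}, yielding
\[
\ubd \Gamma(C) = \max\left\{\delta(\Gamma),\, \ubd C,\, \delta(\Gamma)\right\} = \max\left\{\ubd C,\, \delta(\Gamma)\right\},
\]
as required. There is no genuine obstacle here beyond correctly citing the geometric finiteness dimension theorem; all the analytic work has been carried out in the proof of Theorem \ref{thm:2}.
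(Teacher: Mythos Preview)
Your proposal is correct and matches the paper's own argument exactly: the paper states that this corollary follows immediately from Theorem \ref{thm:2} together with the well-known equality $\delta(\Gamma) = \bd L(\Gamma)$ for non-elementary geometrically finite groups recalled in Section \ref{dimsection}, which is precisely the substitution you make.
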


\begin{cor}
Let $\Gamma$ be a non-elementary Kleinian group  acting on $\mathbb{D}^n$  and  $C$ be a non-empty bounded subset of $\mathbb{D}^n$. Then
$$
\ubd \Gamma(C) = \max\left\{\ubd C,\ubd L(\Gamma)\right\}  = \max\left\{\ubd C,h_c(\Gamma)\right\} 
$$
where $h_c(\Gamma)$ is the convex core entropy of $\Gamma$.
\end{cor}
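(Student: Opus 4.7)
The plan is to derive both equalities directly from Theorem \ref{thm:2} by absorbing the spurious $\delta(\Gamma)$ term into $\ubd L(\Gamma)$ and then rewriting the limit set dimension via the Falk--Matsuzaki theorem. No new geometric input is required beyond what has already been recalled in Section \ref{dimsection}.

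First I would apply Theorem \ref{thm:2} to obtain
\[
\ubd \Gamma(C) = \max\{\ubd L(\Gamma), \ubd C, \delta(\Gamma)\}.
\]
To establish the first equality of the corollary, observe that since $\Gamma$ is non-elementary the chain of inequalities \eqref{dimlimit} is in force, and in particular $\delta(\Gamma) \leq \ubd L(\Gamma)$. Hence $\delta(\Gamma)$ is always dominated in the maximum and may be discarded, leaving
\[
\ubd \Gamma(C) = \max\{\ubd L(\Gamma), \ubd C\}.
\]

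For the second equality, I would invoke the theorem of Falk and Matsuzaki \cite{falk} referenced in Section \ref{dimsection}, which identifies the upper box dimension of the limit set of an arbitrary non-elementary Kleinian group with the convex core entropy, that is, $\ubd L(\Gamma) = h_c(\Gamma)$. Substituting into the previous display yields the stated formula. Since the result is a purely formal consequence of Theorem \ref{thm:2} combined with two external facts already cited in the paper, there is no substantive obstacle; the only point worth verifying is that the non-elementary hypothesis is precisely what is needed both for the inequality $\delta(\Gamma) \leq \ubd L(\Gamma)$ in \eqref{dimlimit} and for the Falk--Matsuzaki identification, and this hypothesis is assumed.
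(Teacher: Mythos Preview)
Your proposal is correct and matches the paper's approach exactly: the paper states that the corollary follows immediately from Theorem \ref{thm:2} together with the well-known results recalled in Section \ref{dimsection}, namely \eqref{dimlimit} to absorb $\delta(\Gamma)$ and the Falk--Matsuzaki identification $\ubd L(\Gamma) = h_c(\Gamma)$.
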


Finally, we note that we obtain simple bounds for the \emph{lower} box dimension of $\Gamma(C)$.  The upper bound uses the upper box dimension and the lower bound uses the fact that the lower box dimension is monotonic and stable under closure.  We get
\[
 \max\left\{\lbd C,\lbd L(\Gamma)\right\} \leq  \lbd \Gamma(C) \leq \max\left\{\ubd C,\ubd L(\Gamma)\right\}.
\]
This can be used to deduce that the box dimension of the orbital set exists in many cases, for example, if the box dimension of $C$ exists and $\Gamma$ is non-elementary and geometrically finite, then 
$$
\bd \Gamma(C)   = \max\left\{\bd C,\delta(\Gamma)\right\}.
$$
It would be interesting to consider the lower box dimension in greater detail, or the Assouad dimension, since both these dimensions are also not countably stable.  However, the lower box dimension is likely to behave very differently, based on \cite{fraserinhom}, in cases when the box dimension of $C$ does not exist.  The Assouad dimension of geometrically finite Kleinian groups was studied in \cite{fraserassouad} and it generally behaves differently from the upper box dimension in the case when there are parabolic elements.  The Assouad dimension of inhomogeneous self-similar sets was considered in \cite{antti}.

\section{Proof of Theorem \ref{thm:2}} \label{proof1}

Throughout the proof we write $A \lesssim B$ to mean there is a constant $c >0$ such that $A \leq cB$.  Similarly, we write $A \gtrsim B$ if $B \lesssim A$ and $A \approx B$ if $A \lesssim B$ and $A \gtrsim B$.  When the constant $c$ depends on a parameter $\theta$ we indicate this with a subscript (or multiple subscripts), e.g.  $A \lesssim_\theta B$.  The implicit constants will often depend on $\Gamma$,  $C$  and other fixed parameters, but it will be crucial that they never depend on the covering scale  $\delta>0$ used to compute the box dimension or on a specific element $g \in \Gamma$. 

Throughout the proof $B(x,R)$ will denote the closed Euclidean ball with centre $x \in \mathbb{D}^n$ and radius $R>0$.  We also write $|E|$ to denote the Euclidean diameter of a non-empty set $E$.

\subsection{Preliminary estimates and results from hyperbolic geometry}

We will frequently use the well-known estimate that, for all $g \in \conp$ and $z \in \mathbb{D}^n$,
\begin{equation} \label{diff}
|g'(z)| \approx_{|z|} 1-|g(z)|
\end{equation}
with implicit constants   independent of $g$. This estimate comes  directly from the definition of the hyperbolic metric and that $g$ is an isometry.

\begin{lma}
\label{comp}
For all  $r \in (0,1)$,  $z \in B(0,r)$ and $g \in\conp$,
\[
\frac{|g'(z)|}{|g'(0)|} \lesssim_r 1.
\]
\end{lma}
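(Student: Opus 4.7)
The plan is to reduce the derivative quotient to a ratio of Euclidean quantities via \eqref{diff}, and then use the isometry property of $g$ together with the explicit formula for the hyperbolic distance from the origin to control that ratio.

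First I would invoke the estimate \eqref{diff} at both $z$ and $0$. For $|z| \leq r$, the implicit constant in $|g'(z)| \approx_{|z|} 1-|g(z)|$ is uniformly bounded in terms of $r$ (since it depends only on $|z|$, not on $g$), and at $z=0$ it is an absolute constant. Hence it suffices to prove
\[
\frac{1-|g(z)|}{1-|g(0)|} \lesssim_r 1.
\]

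Next I would use that $g$ is a hyperbolic isometry and the standard identity $d(0,w) = \log\frac{1+|w|}{1-|w|}$ for $w \in \mathbb{D}^n$. The bound $|z| \leq r$ gives $d(0,z) \leq \log\frac{1+r}{1-r} =: D_r$, so by the isometry property,
\[
d(g(z),g(0)) = d(z,0) \leq D_r.
\]
Applying the triangle inequality to $0$, $g(0)$ and $g(z)$ yields $|d(0,g(z)) - d(0,g(0))| \leq D_r$. Substituting the explicit formula for the hyperbolic distance to the origin, setting $a = e^{d(0,g(0))}$ and $b = e^{d(0,g(z))}$, this reads $b \leq e^{D_r} a$ and $a \leq e^{D_r} b$, i.e. $a$ and $b$ are comparable with constant $e^{D_r} = \frac{1+r}{1-r}$. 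Since $1-|w| = 2/(e^{d(0,w)}+1)$, a short manipulation gives
\[
\frac{1-|g(z)|}{1-|g(0)|} = \frac{a+1}{b+1} \leq \frac{1+r}{1-r},
\]
and combined with the reduction above this finishes the proof.

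There is no real obstacle here: the content is just to track how the constant in \eqref{diff} degenerates as $|z| \to 1$, and to confirm that $D_r$ really controls $\log\bigl((a+1)/(b+1)\bigr)$ and not merely $\log(a/b)$. Both are uniform in $g$ because the isometry statement $d(g(z),g(0)) = d(z,0)$ holds for every $g \in \conp$, which is the only property of $g$ that enters the argument.
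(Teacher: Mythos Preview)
Your proof is correct and follows essentially the same route as the paper: reduce via \eqref{diff} to the ratio $(1-|g(z)|)/(1-|g(0)|)$, then control this using the triangle inequality together with $d(g(0),g(z))=d(0,z)\leq \log\frac{1+r}{1-r}$. The only cosmetic difference is that you use the exact identity $1-|w|=2/(e^{d(0,w)}+1)$ where the paper writes the equivalent approximation $1-|w|\approx e^{-d(0,w)}$; your bound $(a+1)/(b+1)\leq e^{D_r}(b+1)/(b+1)=e^{D_r}$ (using $e^{D_r}\geq 1$) then yields the same constant $\frac{1+r}{1-r}$.
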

\begin{proof}
By \eqref{diff}, 
\[
\frac{|g'(z)|}{|g'(0)|} \approx_r  \frac{1-|g(z)|}{1-|g(0)|} \approx e^{d(0,g(0))-d(0,g(z))} \leq e^{d(g(0),g(z))} = e^{d(0,z)} \leq \frac{1+r}{1-r}
\]
proving the claim.
\end{proof}

\begin{lma}
\label{lemma:5}
Fix a non-empty bounded set $C \subseteq \mathbb{D}^n$.  Then
\[
 N_\delta(g(C))  \lesssim_C  N_{ \delta/ |g'(0)|}(C)
\]
for all  $\delta \in (0,1)$ and  $g \in \conp$.
\end{lma}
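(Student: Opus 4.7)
The plan is to exploit the hyperbolic boundedness of $C$ to show that $g|_C$ is Lipschitz with constant comparable to $|g'(0)|$, and then transport an efficient cover of $C$ through $g$ to an efficient cover of $g(C)$. Since $C$ is bounded in the hyperbolic metric and lies in $\mathbb{D}^n$, there exists $r = r(C) \in (0,1)$ with $C \subseteq B(0,r)$. On this (convex) Euclidean ball, Lemma \ref{comp} combined with the mean value theorem gives
\[
|g(z) - g(w)| \leq \sup_{\xi \in B(0,r)} |g'(\xi)| \cdot |z - w| \lesssim_r |g'(0)| \cdot |z - w|
\]
for all $z, w \in B(0,r)$, so $g|_C$ is $K_r |g'(0)|$-Lipschitz for some constant $K_r$ depending only on $r$ (hence only on $C$).

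Next, I would take an optimal cover $\{U_i\}_{i=1}^{N}$ of $C$ with $N = N_{\delta/|g'(0)|}(C)$ and each $|U_i| \leq \delta/|g'(0)|$. After intersecting with $C \subseteq B(0,r)$ to keep everything inside the Lipschitz domain, the pushforward $\{g(U_i \cap C)\}$ covers $g(C)$ with diameters at most $K_r \delta$. This yields the intermediate bound
\[
N_{K_r \delta}(g(C)) \leq N_{\delta/|g'(0)|}(C).
\]

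To finish, I would invoke the routine observation that any subset of $\mathbb{R}^n$ of diameter $K_r \delta$ can be covered by $\lesssim_{K_r, n} 1$ sets of diameter $\delta$, so $N_\delta(g(C)) \lesssim_C N_{K_r \delta}(g(C))$, which chained with the previous inequality gives the claim. There is no serious obstacle here; the only point requiring care is that all implicit constants depend solely on $C$ (through the radius $r$) and on $n$, and never on the particular element $g$ or the scale $\delta$, which is essential for the box-counting arguments to follow.
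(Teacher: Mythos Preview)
Your proof is correct and follows essentially the same approach as the paper: choose $r$ with $C\subseteq B(0,r)$, take an optimal $(\delta/|g'(0)|)$-cover of $C$, and use Lemma~\ref{comp} to bound the diameters of the pushed-forward sets by a constant (depending only on $r$) times $\delta$. The only cosmetic differences are that you phrase the derivative bound as a Lipschitz estimate via the mean value inequality and make the final refinement from a $K_r\delta$-cover to a $\delta$-cover explicit, whereas the paper absorbs both points into the $\lesssim_C$ notation.
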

\begin{proof}
Let $r \in (0,1)$ be such that $C$ is contained in  $B(0,r)$.  We can choose such an $r$ depending only on $C$ since $C$ is bounded (in the hyperbolic metric). Let $\delta \in (0,1)$,   $g \in \conp$ and $\{U_i\}_i$ be a minimal $\big(\frac{\delta}{|g'(0)|}\big)$-cover of $C$.  We may assume that each $U_i \subseteq B(0,r)$.  Then for each $i$
\begin{align*}
|g(U_i)| \leq \frac{\delta}{|g'(0)|} \sup_{z \in U_i}|g'(z)| \leq \delta \sup_{z \in B(0,r)}\frac{|g'(z)|}{|g'(0)|} \lesssim_r \delta 
\end{align*}
 by Lemma \ref{comp}.  As such, $\{g(U_i)\}_i$ provides a $\lesssim_C \delta$ cover of $g(C)$ and the result follows.
\end{proof}

\begin{lma} \label{volume}
Let $r \in (0,1)$ and $\delta \in (0,1)$.  Suppose $\Gamma$ is a Kleinian group with a loxodromic element. If $g \in \Gamma$ is such that $|g(0)| \geq 1- \delta$, then $g(B(0,r))$ is contained in a $\lesssim_{r, \Gamma} \delta$ neighbourhood of the limit set.  
\end{lma}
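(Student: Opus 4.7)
The plan is to combine two Euclidean estimates: that $g(B(0,r))$ has Euclidean diameter $\lesssim_r \delta$, and that $g(0)$ lies within Euclidean distance $\lesssim_\Gamma \delta$ of $L(\Gamma)$. The first follows immediately from the paper's setup: for $z \in B(0,r)$ the mean value theorem together with Lemma~\ref{comp} gives
\[
|g(z) - g(0)| \leq r \sup_{w \in B(0,r)} |g'(w)| \lesssim_r |g'(0)|,
\]
and \eqref{diff} then bounds $|g'(0)| \approx 1 - |g(0)| \leq \delta$.

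For the distance estimate from $g(0)$ to $L(\Gamma)$, which is the heart of the argument, I would fix once and for all a loxodromic $h \in \Gamma$ with distinct boundary fixed points $p^+, p^- \in L(\Gamma) \cap \partial \mathbb{D}^n$, and set $c_0 := |p^+ - p^-| > 0$ (a positive constant depending only on $\Gamma$). By $\Gamma$-invariance of the limit set, $g(p^\pm) \in L(\Gamma)$. Writing $a = g^{-1}(0)$, so $|a| = |g(0)| \geq 1-\delta$, and $\hat a = a/|a|$, and using that $g \in \conp$ is a M\"obius automorphism of the ball (Liouville's theorem for $n \geq 3$), the standard chordal identity for M\"obius maps yields
\[
|g(0) - g(p)| \;=\; \frac{1 - |a|^2}{|p - a|} \;\leq\; \frac{2\delta}{|p - a|}, \qquad p \in \partial \mathbb{D}^n.
\]
The triangle inequality forces $\max(|p^+ - \hat a|, |p^- - \hat a|) \geq c_0/2$; for the corresponding choice $p \in \{p^+, p^-\}$ one has $|p - a| \geq c_0/2 - (1 - |a|) \geq c_0/4$ whenever $\delta \leq c_0/4$, giving $|g(0) - g(p)| \leq 8\delta/c_0$. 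The case $\delta > c_0/4$ is immediate since $\mathbb{D}^n$ has Euclidean diameter $2$. In either case $|g(0) - g(p)| \lesssim_\Gamma \delta$, placing $g(p) \in L(\Gamma)$ within $\lesssim_\Gamma \delta$ of $g(0)$.

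The main obstacle is precisely this second step: quantitatively producing a limit point near $g(0)$. The loxodromic assumption is essential, since it is what supplies two distinct limit points at a fixed positive separation $c_0$; this ensures no direction $\hat a$ can be angularly close to both $p^+$ and $p^-$ simultaneously, and the M\"obius chordal identity converts the angular separation into the desired Euclidean bound. Combining the two estimates then gives $g(B(0,r)) \subseteq \{x \in \mathbb{R}^n : \mathrm{dist}(x, L(\Gamma)) \lesssim_{r,\Gamma} \delta\}$, as required.
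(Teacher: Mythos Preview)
Your proof is correct and takes a genuinely different route from the paper's. The paper argues geometrically: it fixes a point $z$ on the axis of a loxodromic $h$, observes that $g(z)$ lies on the axis of $ghg^{-1}$, and notes that at least one fixed point of $ghg^{-1}$ must lie in the smallest Euclidean sphere through $g(z)$ meeting $S^{n-1}$ orthogonally, whose diameter is $\lesssim 1-|g(z)| \lesssim_{r,\Gamma} 1-|g(0)| \leq \delta$. You instead compute directly with the M\"obius chordal identity $|g(0)-g(p)| = (1-|a|^2)/|p-a|$ (which is indeed exact for automorphisms of the ball, following from $|g(x)-g(y)|^2 = |g'(x)|\,|g'(y)|\,|x-y|^2$ together with $|g'(0)| = 1-|g(0)|^2$ and $|g'(a)| = (1-|a|^2)^{-1}$), and then use the fixed separation $c_0 = |p^+-p^-|$ to bound the denominator from below. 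Both arguments ultimately exploit the same feature---two limit points at definite Euclidean separation force one of their $g$-images to stay near $g(0)$---but yours makes the quantitative dependence on $c_0$ completely explicit and avoids the orthogonal-sphere picture, at the cost of invoking an identity the paper does not record. The paper's version, on the other hand, stays within the hyperbolic-geometric language already set up (estimate \eqref{diff} and Lemma \ref{comp}) and needs no additional M\"obius formulae. One minor remark: your appeal to Liouville's theorem is unnecessary, since elements of $\conp$ are M\"obius by definition in all dimensions $n\geq 2$.
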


\begin{proof}
Let $h \in \Gamma$ be loxodromic.  Loxodromic elements have precisely two fixed points on the boundary at infinity.  Let $z \in \mathbb{D}^{n}$ be a point lying on the (doubly infinite) geodesic ray joining the fixed points of $h$. We may assume that $h$ and $z$ are chosen to minimise $|z|$, which means $z$ depends only on $\Gamma$.  Then $g(z)$ lies on the geodesic ray joining the loxodromic fixed points of the loxodromic map $g h g^{-1}$.  These fixed points are the images of the fixed points of $h$ under $g$ and at least one of them must lie in the smallest Euclidean sphere passing through $g(z)$ and intersecting the boundary $S^{n-1}$ at right angles.  By  \eqref{diff} and applying Lemma \ref{comp}, the diameter of this  sphere is
\[
\lesssim  1-|g(z)| \lesssim_{r, \Gamma} 1-|g(0)| \leq \delta
\]
and the result follows. The dependency of the implicit constants on $\Gamma$  comes from the fact that in order to apply Lemma \ref{comp} we need to replace $r$ by $\max\{r, |z|\}$.
\end{proof}

\subsection{Proof of Theorem \ref{thm:2}}

\subsubsection{The lower bound}

Since upper box dimension is monotonic and stable under taking closure, it is immediate that
$$\ubd \Gamma(C) \geq \max\left\{\ubd C,\ubd L(\Gamma)\right\}.$$
Moreover, in the non-elementary case, $\ubd L(\Gamma) \geq \delta(\Gamma)$ giving the desired lower bound.  In the elementary case, $\delta(\Gamma) = 0$ unless $\Gamma$ is (freely) generated by finitely many parabolic elements sharing a single fixed point. If this is the case, then $\delta(\Gamma) = k/2$ where $k$ is the rank of  $\Gamma$.  In this case the lower bound $\ubd \Gamma(C) \geq  k/2 = \delta(\Gamma)$ follows since the orbit of a single point under $\Gamma$ is an inverted $k$-dimensional lattice.  It is a simple exercise to show that an inverted $k$-dimensional lattice has upper box dimension $k/2$. This completes the proof of the lower bound.

\subsubsection{The upper bound}

Let $t > \max\left\{\ubd C,\ubd L(\Gamma), \delta(\Gamma)\right\}$. Let $\delta \in (0,1)$.  We decompose  the orbital set   depending on how close the images $g(C)$ are  to the boundary. Images $g(C)$ which are close to the boundary will be small with respect to the Euclidean metric and images $g(C)$ which are far from the boundary will be large.  We then cover the close images  together, essentially just by covering the limit set, and we cover the large images separately.  More precisely, 
\begin{align*}
	N_\delta \left( \bigcup_{g \in \Gamma} g(C) \right) & \leq N_\delta \left( \bigcup_{\{g \in \Gamma : |g(0)| \leq 1 - \delta \}}g(C) \right) + N_\delta \left( \bigcup_{\{g \in \Gamma : |g(0)| \geq 1 - \delta \}}g(C) \right) \\ \\
	 & \leq \sum_{\{g \in \Gamma : |g(0)| \leq 1 - \delta \}} N_\delta(g(C)) + N_\delta \left( \bigcup_{\{g \in \Gamma : |g(0)| \geq 1 - \delta \}}g(C) \right).
\end{align*}

Consider the first term coming from the above decomposition.   First applying Lemma \ref{lemma:5} and then using $t> \ubd C$, we get

\begin{align*}
\sum_{\{g \in \Gamma : |g(0)| \leq 1 - \delta \}} N_\delta(g(C)) & \lesssim_C \sum_{\{g \in \Gamma : |g(0)| \leq 1 - \delta \}} N_{ \delta/ |g'(0)|}(C) \\ \\
&\lesssim_t \sum_{\{g \in \Gamma : |g(0)| \leq 1 - \delta \}}   \left( \frac{\delta}{|g'(0)|} \right) ^{-t}  \\ \\
&\leq \delta^{-t}  \sum_{g \in \Gamma} |g'(0)|^t \\ \\
&\lesssim_{t, \Gamma} \delta^{-t}
\end{align*}
since $t> \delta(\Gamma)$ using \eqref{diff}. In order to obtain the second estimate in the above it is crucial that  $ \delta/ |g'(0)| \lesssim 1$ for the $g$ we are summing over.  This is needed to apply the general box counting estimate for $C$.  However, this follows immediately from basic hyperbolic geometry since
$$
|g'(0)| = 1 - |g(0)|^2 \geq   1-|g(0)| \geq \delta.
$$

We now consider the second term in the original decomposition.  First suppose $\Gamma$ contains a loxodromic element.  Consider $g \in \Gamma$ such that $|g(0)| \geq 1-\delta$.  By Lemma \ref{volume}  $g(C) \subseteq g(B(0,r))$ is within $\lesssim_{C, \Gamma}  \delta$ of $L(\Gamma)$.  It follows that
\[
 \bigcup_{\{g \in \Gamma : |g(0)| \geq 1 - \delta \}}g(C)
\]
lies within a $\lesssim_{C,\Gamma} \delta$ neighbourhood of $L(\Gamma)$ and therefore
\[
N_\delta \left( \bigcup_{\{g \in \Gamma : |g(0)| \geq 1 - \delta \}}g(C) \right) \lesssim_{t,C,\Gamma} \delta^{-t}
\]
since $t > \ubd L(\Gamma)$.  This, combined with the estimate for the first term in the original decomposition, proves the upper bound in Theorem \ref{thm:2} in the case when $\Gamma$ contains a loxodromic element.  If $\Gamma$ does not contain a loxodromic element, then either it is a finite group and  $\ubd \Gamma(C) = \ubd C$ is immediate, or $\Gamma$ is (freely) generated by $k$ parabolic elements with a common fixed point.  In this case, it remains to establish
\[
\ubd \Gamma(C) \leq \max\{\ubd C,k/2\}.
\]
This can be achieved by a direct covering argument, but we present a slicker alternative.  It is known that $\delta(\Gamma') > k/2$ for a geometrically finite Kleinian group $\Gamma'$ which contains a free abelian subgroup of rank $k$ stabilising a parabolic fixed point. Moreover, this lower bound is sharp.  Therefore, we may find a sequence of geometrically finite  non-elementary groups $\Gamma_n$ ($ n \in \mathbb{N}$) each containing $\Gamma$ and with $\delta(\Gamma_n) \to k/2$.  Then for all $n$ the above argument gives
\[
\ubd \Gamma(C) \leq\ubd \Gamma_n(C)  =  \max\{\ubd C,\delta(\Gamma_n)\}
\]
and the result follows. One can even explicitly construct $\Gamma_n$ by choosing $\Gamma_n = \langle \Gamma, h^n\rangle$ where $h$ is a loxodromic element which does not fix the common parabolic fixed point of $\Gamma$ and $n$ is sufficiently large.

\section{Proof of Theorem \ref{examplethm}} \label{proof2}

The set $C$ and group $\Gamma$ are very simple.  The work is in proving that the orbital set has large dimension, and this relies on some number theory.  Let $\alpha>1$ and $\beta \in (0,1)$ be such that $\log \alpha$ and $\log \beta$ are rationally independent.  Here and throughout $\log$ is the natural logarithm.  For example, $\alpha = 2$, and $\beta = 1/3$ suffices.  Let
\[
C=\{ 1-\beta^n : n \in \mathbb{N}\} \subseteq \mathbb{D}^2
\]
noting that $C$ is unbounded in $(\mathbb{D}^2, d)$.  Let $h \in \textup{con}^+(\mathbb{D}^2)$ be the hyperbolic element with repelling fixed point $-1$ and attracting fixed point $1$ given by
\[
h(z) = \frac{(\alpha+1)z + (\alpha-1)}{(\alpha-1)z + (\alpha+1)}.
\]
Let $\Gamma = \langle h \rangle$ be the elementary Fuchsian group generated by $h$.  By construction
\[
\ubd L(\Gamma) =  \ubd C =  \delta(\Gamma) = 0.
\]
In order to prove that $\lbd \Gamma(C) = \ubd \Gamma(C)= 1$ we show that $\Gamma(C)$ is dense in $(-1,1) \subseteq \mathbb{D}^2$, recalling that the box dimensions are stable under taking closure. This shows that the   box dimesions are at least 1, but the orbital set is contained in $(-1,1)$ and so they are also at most 1.  The orbital set has a straightforward description due to the simplicity of $\Gamma$ and $C$:
\begin{align*}
\Gamma(C) &= \{ h^m( 1-\beta^n) : m \in \mathbb{Z}, \, n \in \mathbb{N}\} \\ 
&= \left\{ \frac{(\alpha^m+1)( 1-\beta^n) + (\alpha^m-1)}{(\alpha^m-1)( 1-\beta^n) + (\alpha^m+1)} : m \in \mathbb{Z}, \, n \in \mathbb{N}\right\} \\ 
&= \left\{ \frac{ 2-\alpha^m\beta^n -\beta^n}{2+\alpha^m\beta^n -\beta^n} : m \in \mathbb{Z}, \, n \in \mathbb{N}\right\}
\end{align*}
noting that we switch the role of $m$ and $-m$ in the final expression, which is fine since $m \in \mathbb{Z}$. Let $y \in (0,\infty)$ be such that $\log y \in \mathbb{Q}$. Since $\log \alpha/\log \beta \notin \mathbb{Q}$ we can find sequences $m_k \in \mathbb{Z}, \, n_k \in \mathbb{N}$ such that
\[
\alpha^{m_k}\beta^{n_k} \to y
\]
as $k \to \infty$.  This is a standard application of Dirichlet's approximation theorem.  Moreover, since  $\log \alpha$ and $\log \beta$ are rationally independent we necessarily have $n_k \to \infty$ as $k \to \infty$.  Therefore
\[
\frac{ 2-\alpha^{m_k}\beta^{n_k} -\beta^{n_k}}{2+\alpha^{m_k}\beta^{n_k} -\beta^{n_k}} \to \frac{2-y}{2+y}
\]
as $k \to \infty$.  The set
\[
\left\{ \frac{2-y}{2+y} : y \in (0,\infty) \text{ and } \log y \in \mathbb{Q}\right\}
\]
is dense in $(-1,1)$ and the density of $\Gamma(C)$ in $(-1,1)$ follows.

\end{document}